\newcommand{\DOIFPDF}[2]{\ifx\pdfoutput\undefined #2\else#1\fi}
\newcommand{\PDFABLE}[2]{%
\newif\ifpdf%
\ifx\pdfoutput\undefined\pdffalse%
\else\pdftrue\pdfoutput=1\pdfcompresslevel=9\fi%
\ifpdf%
 \usepackage#1
 \usepackage[pdftex,%
             a4paper,%
             colorlinks,%
             citecolor=blue,%
             pagebackref,%
             plainpages=false]{hyperref}%
\else%
 \usepackage#2
 \usepackage{url}
\fi}
\newcommand{\@THMSTYLES}{%
  \newtheoremstyle{bodyrm}
  {3pt}
  {3pt}
  {}
  {}
  {\bfseries\sffamily}
  {.}
  { }
  {}
  \newtheoremstyle{bodyit}
  {3pt}
  {3pt}
  {\itshape}
  {}
  {\bfseries\sffamily}
  {.}
  { }
  {}
}
\newcommand{\THMEN}{%
  \@THMSTYLES
  \theoremstyle{bodyit}
  \newtheorem{thm}{Theorem}[section]%
  \newtheorem{cor}[thm]{Corollary}%
  \newtheorem{prop}[thm]{Proposition}%
  \newtheorem{lem}[thm]{Lemma}%
  \theoremstyle{bodyrm}%
  \newtheorem{defi}[thm]{Definition}%
  \newtheorem{xpl}[thm]{Example}%
  \newtheorem{exo}[thm]{Exercise}%
  \newtheorem{hyp}[thm]{Hypothesis}%
  \newtheorem{eur}[thm]{Heuristics}%
  \newtheorem{pro}[thm]{Problem}%
  \newtheorem{rem}[thm]{Remark}%
  \newtheorem{prp}[thm]{Property}%
}
\newcommand{\THMFR}{%
  \@THMSTYLES
  \theoremstyle{bodyit}
  \theoremstyle{bodyrm}%
  %
  %
  %
  %
  %
  %
  %
  %
}
\newcommand{\SMALLSECS}{%
 \renewcommand{\section}{\@startsection%
  {section}
  {1}
  {0em}
  {\baselineskip}
  {0.5\baselineskip}
  {\normalfont\large\bfseries}}
 \renewcommand{\subsection}{\@startsection%
  {subsection}
  {2}
  {0em}
  {\baselineskip}
  {0.25\baselineskip}
  {\normalfont\bfseries}}
}
\providecommand{\timenow}{\@tempcnta\time
\@tempcntb\@tempcnta
\divide\@tempcntb60
\ifnum10>\@tempcntb0\fi\number\@tempcntb
\multiply\@tempcntb60
\advance\@tempcnta-\@tempcntb
:\ifnum10>\@tempcnta0\fi\number\@tempcnta}
\newcommand{\versiondetravail}{%
 \renewcommand{\@evenfoot}{%
 \hfil{\tiny\texttt{%
   Version préliminaire, compilée le \today{} à \timenow.}\hfill}}%
 \renewcommand{\@oddfoot}{\@evenfoot}%
}
\newcommand{\Det}[1]{\mathrm{Det}\,}
\renewcommand{\leq}{\leqslant}
\renewcommand{\geq}{\geqslant}
\begin{document}

\title*{Regularization with Approximated $L^2$ Maximum Entropy Method}
\author{J-M. Loubes and P. Rochet}
\institute{J-M. Loubes \at Institut de Math\'ematiques de Toulouse, UMR 5219, Universit\'e Toulouse 3, 118 route de Narbonne
F-31062 Toulouse Cedex 9, France \email{loubes@math.univ-toulouse.fr}
\and P. rochet \at  Institut de Math\'ematiques de Toulouse, UMR 5219, Universit\'e Toulouse 3, 118 route de Narbonne
F-31062 Toulouse Cedex 9, France \email{rochet@math.univ-toulouse.fr}}
%
%
\maketitle

\abstract{We tackle the inverse problem of reconstructing an unknown finite measure $\mu$ from a noisy observation of a generalized moment of $\mu$ defined as the integral of a continuous and bounded operator $\Phi$ with respect to $\mu$. When only a quadratic approximation $\Phi_m$ of the operator  is known, we introduce the $L^2$ approximate maximum entropy solution as a minimizer of a convex functional subject to a sequence of convex constraints. Under several assumptions on the convex functional, the convergence of the approximate solution is established and rates of convergence are provided.}

\section{Introduction}
\label{sec:1}
A number of inverse problems may be stated in the form of reconstructing an unknown measure $\mu$ from observations of generalized moments of $\mu$, i.e., moments $y$ of the form
$$y=\int_{\mathcal{X}}\Phi(x)d\mu(x),$$
where $\Phi:\mathcal{X}\to\mathbb{R}^k$ is a given map.
Such problems are encountered in various fields of sciences, like medical imaging, time-series analysis, speech processing, image restoration from a blurred version of the image, spectroscopy, geophysical sciences, crytallography, and tomography; see for example Decarreau et al (1992), Gzyl (2002), Hermann and Noll (2000), and Skilling (1988).
Recovering the unknown measure $\mu$ is generally an ill-posed problem, which turns out to be difficult to solve in the presence of noise, i.e., one observes $y^{obs}$ given by
\begin{equation}
\label{base}
y^{obs}=\int_{\mathcal{X}}\Phi(x)d\mu(x)+\varepsilon.
\end{equation}
For inverse problems with known operator $\Phi$, regularization techniques allow the solution to be stabilized by giving favor to those solutions which minimize a regularizing functional $J$, i.e., one minimizes $J(\mu)$ over $\mu$ subject to the constraint that $\int_{\mathcal{X}}\Phi(x)d\mu(x) = y$ when $y$ is observed, or $\int_{\mathcal{X}}\Phi(x)d\mu(x) \in K_Y$ in the presence of noise, for some convex set $K_Y$ containing $y^{obs}$.
Several types of regularizing functionals have been introduced in the literature.
In this general setting, the inversion procedure is deterministic, i.e., the noise distribution is not used in the definition of the regularized solution.
Bayesian approaches to inverse problem allow one to handle the noise distribution, provided it is known, yet in general, a distribution like the normal distribution is postulated (see Evans and Stark, 2002 for a survey).
However in many real-world inverse problems, the noise distribution is unknown, and only the output $y$ is easily observable, contrary to the input to the operator.
Consequently very few paired data is available to reliably estimate the noise distribution, thereby causing robustness deficiencies on the retrieved parameters.
Nonetheless, even if the noise distribution is unavailabe to the practitioner, she often knows the {\it noise level}, i.e., the maximal magnitude of the disturbance term, say $\rho>0$, and this information may be reflected by taking a constraint set $K_Y$ of diameter $2\rho$.\\

As an alternative to standard regularizations such as Tikhonov or Galerkin, see for instance Engl, Hanke and Neubauer (1996), we focus on  a regularization functional with grounding in information theory, generally expressed as a negative entropy, leading to {\it maximum entropy} solutions to the inverse problem. In a deterministic framework, maximum entropy solutions have been studied in Borwein and Lewis (1993, 1996), while some others study exist in a Bayesian setting (Gamboa, 1999; Gamboa and Gassiat, 1997), in seismic tomography (Fermin, Loubes and Lude\~na, 2006), in image analysis (Gzyl and Zeev, 2003; Skilling and Gull, 2001).
Regularization with maximum entropy also provides one with a very simple and natural manner to incorporate constraints on the support and the range of the solution (see e.g. the discussion in Gamboa and Gassiat, 1997).\\

In many actual situations, however, the map $\Phi$ is unknown and only an approximation to it is available, say $\Phi_m$,  which converges  in quadratic norm to $\Phi$ as m goes to infinity. In this paper,  following lines devised in Gamboa (1999) and Gamboa and Gassiat (1999) and Loubes and Pelletier (2008), we introduce an approximate maximum entropy on the mean (AMEM) estimate $\hat{\mu}_{m,n}$ of the measure $\mu_X$ to be reconstructed.
This estimate is expressed in the form of a discrete measure concentrated on $n$ points of $\mathcal{X}$.
In our main result, we prove that $\hat{\mu}_{m,n}$ converges to the solution of the initial inverse problem as $m\to\infty$ and $n\to\infty$ and provide a rate of convergence for this estimate.

The paper is organized as follows.
Section 2 introduces some notation and the definition of the AMEM estimate.
In Section 3, we state our main result (Theorem~\ref{th:main}).
Section 4 is devoted to the proofs of our results.

\section{Notation and definitions}
\label{sec:2}
\subsection{Problem position}
\label{subsec:2}
Let $\Phi$ be a continuous and bounded map defined on a subset $\mathcal{X}$ of $\mathbb{R}^d$ and taking values in $\mathbb{R}^k$.
The set of finite measures on $(\mathcal{X},\mathcal{B}(\mathcal{X}))$ will be denoted by $\mathcal{M}(\mathcal{X})$, where $\mathcal{B}(\mathcal{X})$ denotes the Borel $\sigma$-field of $\mathcal{X}$.
Let $\mu_X\in\mathcal{M}(\mathcal{X})$ be an unknown finite measure on $\mathcal{X}$ and consider the following equation:
\begin{equation} \label{pbinv}
y = \int_{\mathcal{X}}\Phi(x)d\mu_X(x).
\end{equation}
Suppose that we observe a perturbed version $y^{obs}$ of the response $y$:
$$y^{obs} = \int_{\mathcal{X}}\Phi(x)d\mu_X(x) + \varepsilon,$$
where $\varepsilon$ is an error term supposed bounded in norm from above by some positive constant $\eta$, representing the maximal noise level.
Based on the data $y^{obs}$, we aim at reconstructing the measure $\mu_X$ with a maximum entropy procedure. As explained in the introduction, the true map $\Phi$ is  {\it unknown} and we assume knowledge of an {\it approximating sequence} $\Phi_m$ to the map $\Phi$, such that
$$\Vert \Phi_m - \Phi \Vert_{\mathbb{L}^2(P_X)} = \sqrt{\mathbb{E}(\Vert \Phi_m(X) - \Phi(X) \Vert^2)} \rightarrow 0, $$
at a rate $\varphi_m$.\\
\indent Let us first introduce some notation.
For all probability measure $\nu$ on $\mathbb{R}^n$, we shall denote by $\mathcal{L}_\nu$, $\Lambda_\nu$, and $\Lambda_\nu^*$ the Laplace, log-Laplace, and Cramer transforms of $\nu$, respectively defined for all $s\in\mathbb{R}^n$ by:
\begin{eqnarray*}
\mathcal{L}_\nu(s) & = & \int_{\mathbb{R}^n}\exp\langle s,x\rangle d\nu(x),\\
\Lambda_\nu(s) & = & \log\mathcal{L}_\nu(s),\\
\Lambda_\nu^*(s) & = & \sup_{u \in \mathbb{R}^n}\{\langle s,u\rangle - \Lambda_\nu(u)\}.
\end{eqnarray*}

Define the set 
$$K_Y = \{y\in\mathbb{R}^k : \|y-y^{obs}\|\leq \eta\},$$
i.e., $K_Y$ is the closed ball centered at the observation $y^{obs}$ and of radius $\eta$.\\

Let $\mathcal{X}$ be a set, and let $\mathcal{P}(\mathcal{X})$ be the set of probability measures on $\mathcal{X}$.
For $\nu,\mu\in\mathcal{P}(\mathcal{X})$, the relative entropy of $\nu$ with respect to $\mu$ is defined by
$$
H(\nu|\mu) =\left\{
\begin{array}{ll}
\int_{\mathcal{X}}\log\left(\frac{d\nu}{d\mu}\right)d\nu & \mathrm{if  } \nu<<\mu\\
 +\infty & \mathrm{otherwise.}
\end{array}\right.
$$
Given a set $\mathcal{C}\in\mathcal{P}(\mathcal{X})$ and a probability measure $\mu\in\mathcal{P}(\mathcal{X})$, an element $\mu^\star$ of $\mathcal{C}$ is called an {\it I-projection} of $\mu$ on $\mathcal{C}$ if
$$H(\mu^\star|\mu) = \inf_{\nu \in \mathcal{C}} H(\nu|\mu).$$

Now we let $\mathcal{X}$ be a locally convex topological vector space of finite dimension.
The dual of $\mathcal{X}$ will be denoted by $\mathcal{X}'$.
The following two Theorems, due to Csiszar (1984), characterize the entropic projection of a given probability measure on a convex set.
For their proofs, see Theorem 3 and Lemma 3.3 in Csiszar (1984), respectively.

\begin{theorem}
\label{theo:csiszar1}
Let $\mu$ be a probability measure on $\mathcal{X}$.
Let $\mathcal{C}$ be a convex subset of $\mathcal{X}$ whose interior has a non-empty intersection with the convex hull of the support of $\mu$.
Let
$$\Pi\left(\mathcal{X}\right) = \{P\in\mathcal{P}(\mathcal{X}) : \int_\mathcal{X} x dP(x) \in \mathcal{C}\}.$$
Then the I-projection $\mu^\star$ of $\mu$ on $\Pi(\mathcal{C})$ is given by the relation
$$d\mu^\star(x) = \frac{\exp\lambda^\star(x)}{\int_\mathcal{X}\exp\lambda^\star(u)d\mu(u)}d\mu(x),$$
where $\lambda^\star \in \mathcal{X}'$ is given by
$$\lambda^\star = \arg\max_{\lambda\in\mathcal{X}'}\left[\inf_{x\in\mathcal{C}}\lambda(x) - \log\int_\mathcal{X}\exp\lambda(x)d\mu(x)\right].$$
\end{theorem}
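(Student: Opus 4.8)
The plan is to treat the I-projection as a convex minimization problem and to derive the stated formula by convex duality, the decisive ingredient being the \emph{tilting identity} for relative entropy combined with the Gibbs variational principle. Throughout, recall that $H(\cdot\,|\,\mu)$ is convex and that $\Pi(\mathcal{C})$ is a convex set, so a minimizer, once shown to exist, is essentially unique by strict convexity of $H$ on its domain.

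First I would record the tilting identity. For fixed $\lambda\in\mathcal{X}'$ with finite Laplace integral, introduce the tilted probability measure $d\mu_\lambda(x) = e^{\lambda(x)}\left(\int_{\mathcal{X}} e^{\lambda(u)}d\mu(u)\right)^{-1}d\mu(x)$. Writing $\log(d\nu/d\mu) = \log(d\nu/d\mu_\lambda) + \lambda - \log\int_{\mathcal{X}} e^{\lambda}d\mu$ and integrating against any $\nu\ll\mu$ gives
$$H(\nu|\mu) = H(\nu|\mu_\lambda) + \int_{\mathcal{X}}\lambda\,d\nu - \log\int_{\mathcal{X}} e^{\lambda(x)}d\mu(x).$$
Since $\lambda$ is linear, $\int_{\mathcal{X}}\lambda\,d\nu = \lambda\!\left(\int_{\mathcal{X}} x\,d\nu(x)\right)$, so if $\nu\in\Pi(\mathcal{C})$ its barycenter lies in $\mathcal{C}$ and $\int_{\mathcal{X}}\lambda\,d\nu \geq \inf_{x\in\mathcal{C}}\lambda(x)$. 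Combined with $H(\nu|\mu_\lambda)\geq 0$ this yields, for every $\lambda$,
$$H(\nu|\mu) \geq \inf_{x\in\mathcal{C}}\lambda(x) - \log\int_{\mathcal{X}} e^{\lambda(x)}d\mu(x),$$
and taking the infimum over $\nu$ and the supremum over $\lambda$ establishes the weak-duality bound $\inf_{\nu\in\Pi(\mathcal{C})}H(\nu|\mu) \geq \sup_{\lambda\in\mathcal{X}'}\left[\inf_{x\in\mathcal{C}}\lambda(x) - \log\int_{\mathcal{X}} e^{\lambda(x)}d\mu(x)\right]$.

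The substance of the proof is the reverse inequality and the attainment of $\lambda^\star$. I would first show that the concave dual functional $D(\lambda) = \inf_{x\in\mathcal{C}}\lambda(x) - \log\int_{\mathcal{X}} e^{\lambda(x)}d\mu(x)$ attains its supremum at some $\lambda^\star\in\mathcal{X}'$; this is precisely where the hypothesis that the interior of $\mathcal{C}$ meets the convex hull of $\mathrm{supp}(\mu)$ enters, as a Slater-type constraint qualification. Once $\lambda^\star$ is secured, writing the first-order (stationarity) condition $0\in\partial D(\lambda^\star)$ and using that the superdifferential of $\lambda\mapsto\inf_{x\in\mathcal{C}}\lambda(x)$ consists of the minimizing points of $\mathcal{C}$, while $\nabla_\lambda\log\int e^{\lambda}d\mu$ is the barycenter of the tilted measure, shows that the barycenter of $\mu^\star := \mu_{\lambda^\star}$ both lies in $\mathcal{C}$ and realizes $\inf_{x\in\mathcal{C}}\lambda^\star(x)$. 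This forces $\mu^\star\in\Pi(\mathcal{C})$ and makes both inequalities above equalities for $\nu=\mu^\star$, simultaneously closing the duality gap and identifying $\mu^\star$ as the I-projection.

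The main obstacle is the attainment step. Concretely, writing $\lambda = tu$ with $\|u\|=1$ and $t\to\infty$, one has $D(tu)/t \to \inf_{x\in\mathcal{C}}u(x) - \max_{x\in\mathrm{supp}(\mu)}u(x)$ by a Laplace-type estimate; choosing $x_0$ in the interior of $\mathcal{C}$ intersected with the convex hull of $\mathrm{supp}(\mu)$ and a ball $B(x_0,r)\subset\mathcal{C}$ gives $\inf_{x\in\mathcal{C}}u(x)\leq u(x_0)-r$ while $\max_{x\in\mathrm{supp}(\mu)}u(x)\geq u(x_0)$, whence the limit is $\leq -r<0$. By compactness of the unit sphere in the finite-dimensional space $\mathcal{X}'$ this makes $D$ coercive, so a maximizing sequence stays bounded and $\lambda^\star$ exists. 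The genuinely delicate point is the boundary case where the optimal barycenter sits on $\partial\mathcal{C}$: there $\inf_{x\in\mathcal{C}}\lambda^\star(x)$ is attained along a supporting hyperplane, and one must argue that $\lambda^\star$ is an inner normal to $\mathcal{C}$ at that point, which is exactly what validates the complementary-slackness identity $\int_{\mathcal{X}}\lambda^\star\,d\mu^\star = \inf_{x\in\mathcal{C}}\lambda^\star(x)$.
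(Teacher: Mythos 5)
First, a point of comparison: the paper does not prove this statement at all --- it is quoted as a known result, with the proof explicitly deferred to Csisz\'ar (1984), Theorem 3. So your argument has to stand on its own. The route you take (convex duality for the entropy functional) is the standard one in the maximum-entropy literature the paper builds on: your tilting identity is the Gibbs/Donsker--Varadhan decomposition, the weak-duality bound derived from it is correct, and the attainment argument is sound --- for each unit direction $u$ the recession limit satisfies $\lim_{t\to\infty}D(tu)/t=\inf_{x\in\mathcal{C}}u(x)-\sup_{x\in\mathrm{supp}(\mu)}u(x)\leq\bigl(u(x_0)-r\bigr)-u(x_0)=-r$, and since $D$ is upper semicontinuous (the log-Laplace transform is lower semicontinuous by Fatou, and an infimum of continuous functions is upper semicontinuous) and $\mathcal{X}'$ is finite dimensional, the superlevel sets of $D$ are compact and a maximizer $\lambda^\star$ exists. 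This is exactly where the Slater-type hypothesis on $\mathrm{int}\,\mathcal{C}\cap\mathrm{co}\,\mathrm{supp}(\mu)$ is needed, and you use it correctly.

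The genuine gap is in the stationarity step, and it is not merely technical. To pass from $0\in\partial D(\lambda^\star)$ to ``the barycenter of the tilted measure lies in $\mathcal{C}$ and realizes $\inf_{x\in\mathcal{C}}\lambda^\star(x)$'' you need the superdifferential of $D=g-\Lambda_\mu$ (with $g(\lambda)=\inf_{x\in\mathcal{C}}\lambda(x)$ and $\Lambda_\mu(\lambda)=\log\int_{\mathcal{X}}e^{\lambda}d\mu$) to split as $\partial g(\lambda^\star)-\nabla\Lambda_\mu(\lambda^\star)$, which requires $\Lambda_\mu$ to be differentiable at $\lambda^\star$, i.e. $\lambda^\star\in\mathrm{int}\,\mathrm{dom}\,\Lambda_\mu$. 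Nothing in the hypotheses guarantees this: when $\mathrm{supp}(\mu)$ is unbounded, $\mathrm{dom}\,\Lambda_\mu$ is in general a proper convex subset of $\mathcal{X}'$, your coercive maximizer can sit on its boundary, and at such a point the tilted measure $\mu_{\lambda^\star}$ need not even possess a barycenter, so the first-order condition you invoke is meaningless there. This is precisely the regime in which the conclusion must be weakened to generalized I-projections, which can acquire a singular part; the paper itself flags this phenomenon in the remark following its main theorem (citing Borwein and Lewis (1993) and Gamboa and Gassiat (1999)), and its Assumption 2-(i) (full domain of the log-Laplace transform) exists exactly to rule it out. A second, smaller issue: the superdifferential of $g$ at $\lambda^\star$ consists of points of the closure $\overline{\mathcal{C}}$ at which $\lambda^\star$ attains its infimum over $\mathcal{C}$, so your argument places the barycenter of $\mu^\star$ only in $\overline{\mathcal{C}}$, not in $\mathcal{C}$; you flag this boundary case, but the proposed fix (``$\lambda^\star$ is an inner normal'') is an assertion, not an argument. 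Your proof becomes complete if one adds the hypotheses that $\mathrm{dom}\,\Lambda_\mu=\mathcal{X}'$ and that $\mathcal{C}$ is closed; in the generality in which the theorem is quoted, the stationarity step needs Csisz\'ar's finer analysis.
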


Now let $\nu_Z$ be a probability measure on $\mathbb{R}_+$.
Let $P_X$ be a probability measure on $\mathcal{X}$ having full support, and define the convex functional $I_{\nu_Z}(\mu|P_X)$ by:
$$
I_{\nu_Z}(\mu | P_X)  =
\begin{cases}
\int_{\mathcal{X}} \Lambda_{\nu_Z}^*\left(\frac{d\mu}{dP_X}\right)dP_X & \mathrm{if}\,\,\mu<<P_X\\
+\infty & \mathrm{otherwise.}
\end{cases}
$$
Within this framework, we consider as a solution of the inverse problem \eqref{pbinv} a minimizer of the functional $I_{\nu_Z}(\mu | P_X)$ subject to the constraint
$$\mu \in S(K_Y) = \{\mu\in\mathcal{M}(\mathcal{X}) : \int_{\mathcal{X}} \Phi(x)d\mu(x) \in K_Y \}.$$

\subsubsection{The AMEM estimate}
\label{subsec:3}
We introduce the approximate maximum entropy on the mean (AMEM) estimate as a sequence $\hat{\mu}_{m,n}$ of discrete measures on $\mathcal{X}$.
In all of the following, the integer $m$ indexes the approximating sequence $\Phi_m$ to $\Phi$, while the integer $n$ indexes a random discretization of the space $\mathcal{X}$.
For the construction of the AMEM estimate, we proceed as follows.\\

Let $(X_1,\dots,X_n)$ be an i.i.d sample drawn from $P_X$.
Thus the empirical measure $\frac{1}{n}\sum_{i=1}^n\delta_{X_i}$ converges weakly to $P_X$.\\

Let $L_n$ be the discrete measure with random weights defined by
$$L_n = \frac{1}{n}\sum_{i=1}^nZ_i\delta_{X_i},$$
where $(Z_i)_i$ is a sequence of i.i.d. random variables on $\mathbb{R}$.\\

For $\mathcal{S}$ a set we denote by $\mathrm{co}\, \mathcal{S}$ its convex hull.
Let $\Omega_{m,n}$ be the probability event defined by
\begin{equation}
\label{eq:omegan}
\Omega_{m,n} = [K_Y \cap \mathrm{co}\, Supp\, F_*\nu_Z^{\otimes n} \neq \emptyset]
\end{equation}
where
$F:\mathbb{R}^n\to\mathbb{R}^k$ is the linear operator associated with the matrix
$\mathbf{A}_{m,n} = \frac{1}{n}(\Phi_m^{i}(X_j))_{(i,j)\in  [1,k]\times [1,n]}$
and where $F_*\nu_Z^{\otimes n}$ denotes the image measure of $\nu_Z^{\otimes n}$ by $F$.
For ease of notation, the dependence of $F$ on $m$ and $n$ will not be explicitely written throughout.\\

Denote by $\mathcal{P}(\mathbb{R}^n)$ the set of probability measures on $\mathbb{R}^n$.
For any map $\Psi:\mathcal{X} \to \mathbb{R}^k$ define the set
$$\Pi_n(\Psi,K_Y) = \left\{\nu\in\mathcal{P}(\mathbb{R}^n) : \mathbb{E}_\nu\left[ \int_{\mathcal{X}} \Psi(x) dL_n(x) \right] \in K_Y \right\}.$$
Let $\nu_{m,n}^\star$ be the I-projection of $\nu_Z^{\otimes n}$ on $\Pi_n(\Phi_m,K_Y)$.\\

Then, on the event $\Omega_{m,n}$, we define the AMEM estimate $\hat{\mu}_{m,n}$  by
\begin{equation}
\hat{\mu}_{m,n} = \mathbb{E}_{\nu_{m,n}^\star}\left[L_n\right],
\end{equation}
and we extend the definition of $\hat{\mu}_{m,n}$ to the whole probability space by setting it to the null measure on the complement $\Omega_{m,n}^c$ of $\Omega_{m,n}$.
In other words, letting $(z_1,...,z_n)$ be the expectation of the measure $\nu_{m,n}^\star$, the AMEM estimate may be rewritten more conveniently as
\begin{equation}
\label{eq:cmemonomega}
\hat{\mu}_{m,n} = \frac{1}{n}\sum_{i=1}^nz_i\delta_{X_i}
\end{equation}
with $z_i=\mathbb{E}_{\nu_{m,n}^\star}(Z_i)$ on $\Omega_{m,n}$, and as $\hat{\mu}_{m,n} \equiv 0$ on $\Omega_{m,n}^c$.
It is shown in Loubes and Pelletier (2008) that $\mathbb{P}(\Omega_{m,n}) \to 1$ as $m\to\infty$ and $n\to\infty$.
Hence for $m$ and $n$ large enough, the AMEM estimate $\hat{\mu}_{m,n}$ may be expressed as in (\ref{eq:cmemonomega}) with high probability, and asymptotically with probability $1$.

\begin{remark}
The construction of the AMEM estimate relies on a discretization of the space $\mathcal{X}$ according to the probability $P_X$. Therefore by varying the support of $P_X$, the practitioner may easily incorporate some a-priori knowledge concerning the support of the solution.
Similarly, the AMEM estimate also depends on the measure $\nu_Z$, which determines the domain of $\Lambda_{\nu_Z}^*$, and so the range of the solution.
\end{remark}

\section{Convergence of the AMEM estimate}
\label{sec:3}
\subsection{Main Result}
\label{subsec:4}

  \hspace{0.36cm} {\bf Assumption 1}
The minimization problem admits at least one solution, i.e., there exists a continuous function $g_0:\mathcal{X}\to \mathrm{co}\, \rm{Supp}\, \nu_Z$ such that
$$\int_{\mathcal{X}} \Phi(x)g_0(x)dP_X(x) \in K_Y .$$

{\bf Assumption 2}
\begin{itemize}
\item[(i)] ${\rm dom}\: \Lambda_{\nu_Z}:=\{s : |\Lambda_{\nu_Z}(s)|<\infty\} = \mathbb{R}$;
\item[(ii)] $\Lambda'_{\nu_Z}$ and $\Lambda''_{\nu_Z}$ are bounded.
\end{itemize}

{\bf Assumption 3} 
The approximating sequence $\Phi_m$ converges to $\Phi$ in $L^2(\mathcal{X},P_X)$. Its rate of convergence is given by
$$ \Vert \Phi_m - \Phi \Vert_{\mathbb{L}^2}= O(\varphi_m^{-1})$$
 
{\bf Assumption 4} $\Lambda_{\nu_Z}$ is a convex function \\

{\bf Assumption 5} For all $m$, the components of $\Phi_m$ are linearly independent\\

{\bf Assumption 6} $\Lambda'_{\nu_Z}$ and $\Lambda''_{\nu_Z}$ are continuous functions.\\
We are now in a position to state our main result.

\begin{theorem}[Convergence of the AMEM estimate]\label{th:main}
Suppose that Assumption~1, Assumption~2, and Assumption~3 hold.
Let $\mu^*$ be the minimizer of the functional
$$I_{\nu_Z}(\mu | P_X) = \int_{\mathcal{X}} \Lambda_{\nu_Z}^*\left(\frac{d\mu}{dP_X}\right)dP_X$$
subject to the constraint $\mu \in S(K_Y) = \{\mu\in\mathcal{M}(\mathcal{X}) : \int_{\mathcal{X}} \Phi(x)d\mu(x) \in K_Y \}.$
\begin{itemize}
\item Then the AMEM estimate $\hat{\mu}_{m,n}$ is defined by 
$$ {\hat\mu_{m,n} = \frac{1}{n} \displaystyle \sum_{i=1}^n \Lambda_{\nu_Z}'(\langle \hat{v}_{m,n},\Phi_m(X_i)\rangle)\delta_{X_i}}  $$
where ${\hat{v}_{m,n}}$ minimizes  on $ \mathbb{R}^k$  
$$ { H_n(\Phi_m,v)=\frac{1}{n} \displaystyle \sum_{i=1}^n \Lambda_{\nu_Z}(\langle v,\Phi_m(X_i)\rangle) - \underset{y \in K_Y}{\inf} \langle v,y \rangle} $$ 
\item Moreover, under Assumption~4,  Assumption~2, and Assumption~3, it converges weakly to $\mu^*$ as $m\to\infty$ and $n\to\infty$. Its rate of convergence is given by 
$$ \Vert\hat\mu_{m,n}-\mu^*\Vert_{VT} = O_P(\varphi_m^{-1})  + O_P \left(\dfrac{1}{\sqrt{n}} \right). $$
\end{itemize}
\end{theorem}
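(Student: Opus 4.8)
The plan is to treat the two assertions separately. The closed-form expression for $\hat\mu_{m,n}$ is a direct consequence of Theorem~\ref{theo:csiszar1} applied to the product reference measure $\nu_Z^{\otimes n}$ on $\mathbb{R}^n$. First I would observe that the constraint defining $\Pi_n(\Phi_m,K_Y)$ depends on $\nu$ only through its mean: since $\mathbb{E}_\nu[\int_{\mathcal{X}}\Phi_m\,dL_n] = F(\mathbb{E}_\nu[Z])$ with $F$ the linear map of matrix $\mathbf{A}_{m,n}$, the constraint reads $\mathbb{E}_\nu[Z]\in\mathcal{C}:=F^{-1}(K_Y)$, a closed convex subset of $\mathbb{R}^n$. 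Csiszar's theorem then yields the exponential tilt $d\nu^\star_{m,n}\propto\exp\langle\lambda^\star,z\rangle\,d\nu_Z^{\otimes n}$, where $\lambda^\star$ maximizes $\inf_{z\in\mathcal{C}}\langle\lambda,z\rangle - \sum_i\Lambda_{\nu_Z}(\lambda_i)$ (the log-Laplace of a product measure being additive). The key reduction is that this maximand is $-\infty$ unless $\lambda\perp\ker F$, i.e. $\lambda\in\mathrm{Range}(F^*)$, so one may write $\lambda=F^*v$; then $\langle F^*v,z\rangle=\langle v,Fz\rangle$ and, because $F$ is onto $\mathbb{R}^k$ almost surely under Assumption~5 and the full support of $P_X$, $\inf_{z\in\mathcal{C}}\langle F^*v,z\rangle=\inf_{y\in K_Y}\langle v,y\rangle$, while $(F^*v)_i=\tfrac1n\langle v,\Phi_m(X_i)\rangle$. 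After the rescaling $v\mapsto nv$ the dual objective equals $-n\,H_n(\Phi_m,v)$, so $\hat v_{m,n}=\arg\min_v H_n(\Phi_m,v)$. Finally, the mean of a product exponential family is the gradient of its log-Laplace, whence $z_i=\mathbb{E}_{\nu^\star_{m,n}}(Z_i)=\Lambda'_{\nu_Z}(\langle\hat v_{m,n},\Phi_m(X_i)\rangle)$; substituting into $\hat\mu_{m,n}=\tfrac1n\sum_i z_i\delta_{X_i}$ gives the announced formula.

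For the convergence and rate I would first record the population counterpart of this duality (the maximum-entropy-on-the-mean identity, in the spirit of Gamboa--Gassiat): the primal minimizer $\mu^*$ is absolutely continuous with respect to $P_X$ with density $g^*(x)=\Lambda'_{\nu_Z}(\langle v^*,\Phi(x)\rangle)$, where $v^*$ minimizes the population dual $H(\Phi,v)=\int_{\mathcal{X}}\Lambda_{\nu_Z}(\langle v,\Phi(x)\rangle)\,dP_X(x)-\inf_{y\in K_Y}\langle v,y\rangle$; existence follows from Assumption~1 and the Legendre relation $(\Lambda_{\nu_Z}^*)'=(\Lambda'_{\nu_Z})^{-1}$. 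Testing against a bounded Lipschitz $f$ (the bound being uniform over the relevant unit ball, which is what the $VT$-rate records) I would split, on the event $\Omega_{m,n}$ of probability tending to one,
$$\int f\,d\hat\mu_{m,n}-\int f\,d\mu^* = \underbrace{\frac1n\sum_i f(X_i)\big[\Lambda'_{\nu_Z}(\langle\hat v_{m,n},\Phi_m(X_i)\rangle)-\Lambda'_{\nu_Z}(\langle v^*,\Phi(X_i)\rangle)\big]}_{(\mathrm{I})}+\underbrace{\Big[\frac1n\sum_i f(X_i)g^*(X_i)-\int f g^*\,dP_X\Big]}_{(\mathrm{II})}.$$
Term $(\mathrm{II})$ is a centered empirical average of a bounded function, hence $O_P(n^{-1/2})$. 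Term $(\mathrm{I})$ is controlled using the Lipschitz property of $\Lambda'_{\nu_Z}$ (its derivative $\Lambda''_{\nu_Z}$ is bounded, Assumption~2) by $C\,(\|\hat v_{m,n}-v^*\|\,\sup_i\|\Phi_m(X_i)\|+\|v^*\|\,\tfrac1n\sum_i\|\Phi_m(X_i)-\Phi(X_i)\|)$, whose second summand is $O_P(\varphi_m^{-1})$ by Assumption~3 and boundedness of $\Phi$.

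Everything thus reduces to the rate of the finite-dimensional M-estimator $\hat v_{m,n}$. By Assumption~4 the population dual is convex, and since $\inf_{y\in K_Y}\langle v,y\rangle=\langle v,y^{obs}\rangle-\eta\|v\|$ for the ball $K_Y$, its Hessian at $v^*$ is $\nabla^2 H(\Phi,v^*)=\int\Lambda''_{\nu_Z}(\langle v^*,\Phi\rangle)\,\Phi\Phi^{\top}\,dP_X$, which is positive definite by non-degeneracy of $\nu_Z$ (so $\Lambda''_{\nu_Z}>0$) together with linear independence of the components of $\Phi$ (the limiting form of Assumption~5); this identifies $v^*$ as a well-separated minimizer. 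I would then show that $H_n(\Phi_m,\cdot)$ and its gradient converge to $H(\Phi,\cdot)$ uniformly on a compact neighborhood of $v^*$, with the two error sources $O_P(n^{-1/2})$ (empirical mean versus integral, via a law of large numbers and a maximal inequality) and $O_P(\varphi_m^{-1})$ (replacing $\Phi_m$ by $\Phi$, via the Lipschitz bound on $\Lambda_{\nu_Z}$). Convexity and argmin-continuity give consistency, and a one-term Taylor expansion (using continuity of $\Lambda'_{\nu_Z},\Lambda''_{\nu_Z}$, Assumption~6) of the optimality condition $\nabla_v H_n(\Phi_m,\hat v_{m,n})=0$ about $v^*$ yields
$$\hat v_{m,n}-v^* = -\big[\nabla^2 H(\Phi,v^*)\big]^{-1}\,\nabla_v H_n(\Phi_m,v^*)+o_P(\|\hat v_{m,n}-v^*\|).$$
Since $\nabla_v H_n(\Phi_m,v^*)=\nabla_v H_n(\Phi_m,v^*)-\nabla_v H(\Phi,v^*)=O_P(n^{-1/2})+O_P(\varphi_m^{-1})$, we get $\|\hat v_{m,n}-v^*\|=O_P(n^{-1/2})+O_P(\varphi_m^{-1})$; feeding this into $(\mathrm{I})$ and combining with $(\mathrm{II})$ produces the claimed rate.

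The main obstacle is this M-estimation step carried out in a regime where $m$ and $n$ tend to infinity simultaneously and contribute errors of different natures: the uniform-convergence and Taylor arguments must be made robust to the perturbation $\Phi_m\to\Phi$ of the design, not merely to sampling noise. A secondary but genuine difficulty is the non-smoothness of the support function $v\mapsto\inf_{y\in K_Y}\langle v,y\rangle$ at $v=0$: the differentiable expansion above is valid only when the constraint is active, i.e. $v^*\neq0$, so one must either assume the noise saturates the ball or treat the inactive case separately, where $\hat\mu_{m,n}$ reduces to the unconstrained maximum-entropy measure and the analysis simplifies.
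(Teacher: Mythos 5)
Your proposal is essentially correct but takes a genuinely different route from the paper on both parts of the statement. First, you actually derive the dual representation of $\hat\mu_{m,n}$ (the first bullet) from Theorem~\ref{theo:csiszar1}, via the reduction of the I-projection constraint to a constraint on the mean, the restriction $\lambda=F^{*}v$, and the gradient-of-log-Laplace identity for the tilted product measure; the paper's proof never establishes this step at all --- it takes the dual form as given (inherited from Loubes and Pelletier (2008)) and proves only the rate, so this portion of your argument supplies something the paper leaves implicit. Second, for the rate the paper routes everything through an intermediate object you never introduce: $\hat v_{m,\infty}=\arg\min_{v} H(\Phi_m,v)$ and the associated measure $\hat\mu_{m,\infty}=\Lambda'_{\nu_Z}(\langle\Phi_m(\cdot),\hat v_{m,\infty}\rangle)P_X$, with the splits $\Vert\hat\mu_{m,n}-\mu^*\Vert_{VT}\leq\Vert\hat\mu_{m,n}-\hat\mu_{m,\infty}\Vert_{VT}+\Vert\hat\mu_{m,\infty}-\mu^*\Vert_{VT}$ and, at the dual level, $\Vert\hat v_{m,n}-\hat v_{m,\infty}\Vert=O_P(n^{-1/2})$ (Lemma~\ref{lem1}, a fixed-$m$ M-estimation result via van der Vaart's Corollary 5.53, with constants uniform in $m$) plus $\Vert\hat v_{m,\infty}-v^*\Vert=O_P(\varphi_m^{-1})$ (Lemma~\ref{lem2}, a purely deterministic perturbation bound resting on the optimization Lemma~\ref{Lemme52}). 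You instead compare $\hat v_{m,n}$ to $v^*$ directly, absorbing both error sources into one Taylor expansion of the empirical first-order condition. Both routes yield the same rate, but the paper's intermediate point is precisely the device that dispenses with what you yourself flag as the main obstacle (making the M-estimation step robust to the simultaneous design perturbation $\Phi_m\to\Phi$): in the paper the stochastic step is carried out at fixed $m$, and the $m$-perturbation is handled deterministically, so your one-step version is more compact but not a free simplification --- the uniform-in-$m$ control you describe genuinely has to be proved. Two smaller remarks: your bound for term $(\mathrm{I})$ should involve $\frac1n\sum_i\Vert\Phi_m(X_i)\Vert$, which is what Cauchy--Schwarz actually gives and is $O_P(1)$ under Assumption 3, rather than $\sup_i\Vert\Phi_m(X_i)\Vert$, which need not be controlled since $\Phi_m$ is only $L^2$-bounded; and your observation about the non-differentiability of $v\mapsto\inf_{y\in K_Y}\langle v,y\rangle$ at $v=0$ is a genuine point that the paper itself glosses over (its computation of the Hessian of $N(v)=\Vert v\Vert$ is done only for $v\neq 0$).
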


\begin{remark}
Assumption 2-(i) ensures that the function $H(\Phi,v)$ in Theorem~\ref{th:main} attains its minimum at a unique point $v^\star$ belonging to the interior of its domain.
If this assumption is not met, Borwein and Lewis (1993) and Gamboa and Gassiat (1999) have shown that the minimizers of $I_{\nu_Z}(\mu | P_X)$ over $S(K_Y)$ may have a singular part with respect to $P_X$.
\end{remark}

\begin{proof}
The rate of convergence of the AMEM estimate depends both on the discretization $n$ and the convergence of the approximated operator $m$. Hence we consider
\begin{eqnarray*} 
\hat{v}_{m,\infty} & = & \underset{v \in \mathbb{R}^k}{\text{argmin}} \ H(\Phi_m,v)   =   \underset{v \in \mathbb{R}^k}{\text{argmin}} \left\{ \int_\mathcal{X} \Lambda_{\nu_Z}(\langle  \Phi_m(x),v \rangle ) dP_X - \underset{y \in K_Y}{\inf} \langle v,y \rangle \right\}, \\
\hat\mu_{m,n} & = & \frac{1}{n} \displaystyle \sum_{i=1}^n \Lambda_{\nu_Z}'(\langle \hat{v}_{m,n},\Phi_m(.)\rangle)\delta_{X_i},\\
\hat\mu_{m,\infty}\! & = & \Lambda'_{\nu_Z}(\langle \Phi_m(.),\hat{v}_{m,\infty}\rangle) P_X. 
\end{eqnarray*} 
We have the following upper bound
$$\Vert \hat\mu_{m,n} - \mu^* \Vert_{VT} \leq \Vert \hat\mu_{m,n} - \hat\mu_{m,\infty} \Vert_{VT} + \Vert \hat\mu_{m,\infty} - \mu^* \Vert_{VT}, $$
where each term must be tackled separately.\\
First, let us consider $\Vert \hat\mu_{m,n} - \hat\mu_{m,\infty} \Vert_{VT}$.
\begin{eqnarray*}
\Vert \hat\mu_{m,n}\!\!- \hat\mu_{m,\infty} \Vert_{VT}\!\!\!&=&\!\!\!\Vert \dfrac{1}{n} \sum_{i=1}^n \Lambda'_{\nu_Z}(\langle \Phi_m , \hat{v}_{m,n} \rangle)\delta_{X_i} - \Lambda'_{\nu_Z}(\langle \Phi_m , \hat{v}_{m,\infty} \rangle) P_X  \Vert_{VT}\\ 
&\leq&\!\!\!\Vert \dfrac{1}{n} \sum_{i=1}^n \left( \Lambda'_{\nu_Z}(\langle \Phi_m , \hat{v}_{m,n} \rangle) - \Lambda'_{\nu_Z}(\langle \Phi_m ,\hat{v}_{m,\infty} \rangle) \right) \delta_{X_i}\Vert_{VT} \\
& & + \ \Vert \dfrac{1}{n} \sum_{i=1}^n \Lambda'_{\nu_Z}(\langle \Phi_m , \hat{v}_{m,\infty} \rangle)\delta_{X_i} - \Lambda'_{\nu_Z}(\langle \Phi_m , \hat{v}_{m,\infty} \rangle)\! P_X \Vert_{VT}
\end{eqnarray*}
To bound the first term $\Vert \dfrac{1}{n} \displaystyle\sum_{i=1}^n \left( \Lambda'_{\nu_Z}(\langle \Phi_m , \hat{v}_{m,n} \rangle) - \Lambda'_{\nu_Z}(\langle \Phi_m , \hat{v}_{m,\infty} \rangle) \right) \delta_{X_i}\Vert_{VT}$, let $g$ be a bounded measurable function and write
\begin{eqnarray*} & \! \! \! \! \! \!  & \dfrac{1}{n} \sum_{i=1}^n g(X_i) \left( \Lambda'_{\nu_Z}(\langle \Phi_m(X_i) , \hat{v}_{m,n} \rangle) - \Lambda'_{\nu_Z}(\langle \Phi_m(X_i) , \hat{v}_{m,\infty} \rangle) \right) \ \ \ \ \ \ \ \  \ \ \  \\
& \leq & \Vert g \Vert_\infty \Vert \Lambda''_{\nu_Z}\Vert_\infty \dfrac{1}{n} \sum_{i=1}^n \langle \Phi_m(X_i) , \hat{v}_{m,n} - \hat{v}_{m,\infty} \rangle \\
& \leq & \Vert g \Vert_\infty \Vert \Lambda''_{\nu_Z}\Vert_\infty \Vert \hat{v}_{m,n} - \hat{v}_{m,\infty} \Vert \: \dfrac{1}{n} \sum_{i=1}^n \Vert \Phi_m(X_i) \Vert
\end{eqnarray*}
where we have used Cauchy-Schwarz inequality. Since $(\Phi_m)_m$ converges in $\mathbb{L}^2(P_X)$, it is bounded in $\mathbb{L}^2(P_X)$-norm, yelding that  $\frac{1}{n} \sum_{i=1}^n \Vert \Phi_m(X_i) \Vert$ converges almost surely to $ \mathbb{E}\Vert \Phi_m(X) \Vert < \infty$. Hence, there exists  $K_1 > 0$ such that
$$ \Vert \dfrac{1}{n} \sum_{i=1}^n \left( \Lambda'_{\nu_Z}(\langle \Phi_m , \hat{v}_{m,n} \rangle) - \Lambda'_{\nu_Z}(\langle \Phi_m , \hat{v}_{m,\infty} \rangle) \right) \delta_{X_i}\Vert_{VT} \leq K_1  \Vert \hat{v}_{m,n} - \hat{v}_{m,\infty} \Vert  . $$
For the second term, we obtain
$$\Vert \dfrac{1}{n} \displaystyle\sum_{i=1}^n \Lambda'_{\nu_Z}(\langle \Phi_m , \hat{v}_{m,\infty} \rangle)\delta_{X_i} - \Lambda'_{\nu_Z}(\langle \Phi_m , \hat{v}_{m,\infty} \rangle)\! P_X \Vert_{VT} = O_P \left( \dfrac{1}{\sqrt{n}}\right). $$
Hence we get 
$$ \Vert \hat\mu_{m,n} - \hat\mu_{m,\infty} \Vert_{VT} \leq K_1  \Vert \hat{v}_{m,n} - \hat{v}_{m,\infty} \Vert  + O_P \left( \dfrac{1}{\sqrt{n}}\right). $$
The second step is to consider $\Vert \hat\mu_{m,\infty} - \mu^* \Vert_{VT} $ and to follow the same guidelines. So, we get
\begin{eqnarray*}
\Vert \hat\mu_{m,\infty}\! -\! \mu^* \Vert_{VT}\!\!\!&=&\!\!\! \Vert \left( \Lambda'_{\nu_Z}(\langle \Phi_m , \hat{v}_{m,\infty} \rangle) - \Lambda'_{\nu_Z}(\langle \Phi , v^* \rangle) \right)\! P_X \Vert_{VT} \\
&\leq&\!\!\!\Vert \left( \Lambda'_{\nu_Z}(\langle \Phi_m , \hat{v}_{m,\infty} \rangle) - \Lambda'_{\nu_Z}(\langle \Phi_m , v^* \rangle) \right)\! P_X \Vert_{VT}\\
& & \ + \Vert \left( \Lambda'_{\nu_Z}(\langle \Phi_m , v^* \rangle) - \Lambda'_{\nu_Z}(\langle \Phi , v^* \rangle) \right)\! P_X \Vert_{VT} 
\end{eqnarray*}
Fo any bounded measurable function $g$, we can write still using Cauchy-Schwarz inequality that 
\begin{eqnarray*} &\! \! \! \! \! \!  & \int_\mathcal{X} g(x) \left( \Lambda'_{\nu_Z}(\langle \Phi_m(x) , \hat{v}_{m,\infty} \rangle) - \Lambda'_{\nu_Z}(\langle \Phi_m(x) , v^* \rangle) \right) dP_X(x) \ \ \ \ \ \ \  \ \ \  \\
& \leq & \int_\mathcal{X} g(x) \Lambda''_{\nu_Z}(\xi) \langle \Phi_m(x),\hat{v}_{m,\infty} - v^* \rangle dP_X(x) \\
& \leq & \Vert \Lambda''_{\nu_Z} \Vert_\infty \sqrt{\mathbb{E}(g(X))^2 } \sqrt{\mathbb{E}(\Vert \Phi_m(X) \Vert^2)} \:  \Vert \hat{v}_{m,\infty} - v^* \Vert 
\end{eqnarray*}
Hence there exists $K_2 > 0$ such that
$$ \Vert \left( \Lambda'_{\nu_Z}(\langle \Phi_m , \hat{v}_{m,\infty} \rangle) - \Lambda'_{\nu_Z}(\langle \Phi_m , v^* \rangle) \right)\! P_X \Vert_{VT} \leq K_2 \Vert \hat{v}_{m,\infty} - v^* \Vert .$$
Finally, the last term  $\Vert \left( \Lambda'_{\nu_Z}(\langle \Phi_m , v^* \rangle) - \Lambda'_{\nu_Z}(\langle \Phi , v^* \rangle) \right) P_X \Vert_{VT}$ can be bounded. Indeed, for any measurable bounded $g$ 
\begin{eqnarray*} &\! \! \! \! \! \!  & \int_\mathcal{X} g(x) \left( \Lambda'_{\nu_Z}(\langle \Phi_m(x) , v^* \rangle) - \Lambda'_{\nu_Z}(\langle \Phi(x) , v^* \rangle) \right) dP_X(x) \ \ \ \ \ \ \  \ \ \ \ \ \ \ \ \\
 & = & \int_\mathcal{X} g(x) \Lambda''_{\nu_Z}(\xi_x) \langle \Phi_m(x) - \Phi(x),v^* \rangle dP_X(x) \\
 & \leq & \int_\mathcal{X} g(x) \Lambda''_{\nu_Z}(\xi_x) \Vert \Phi_m(x) - \Phi(x) \Vert \Vert v^* \Vert dP_X(x) \\
& \leq & \Vert v^* \Vert \Vert \Lambda''_{\nu_Z} \Vert_\infty \sqrt{\mathbb{E}(g(X))^2 } \sqrt{\mathbb{E}(\Vert \Phi_m(X )- \Phi(X) \Vert^2)} 
\end{eqnarray*}
Hence there exists  $K_3 > 0$ such that
$$ \Vert \left( \Lambda'_{\nu_Z}(\langle \Phi_m , v^* \rangle) - \Lambda'_{\nu_Z}(\langle \Phi , v^* \rangle) \right) P_X \Vert_{VT} \leq K_3 \Vert \Phi_m - \Phi \Vert_{\mathbb{L}^2} $$
We finally obtain the following bound
$$ \Vert \hat\mu_{m,n} - \mu^* \Vert_{VT} \leq  K_1  \Vert \hat{v}_{m,n} - \hat{v}_{m,\infty} \Vert +  K_2 \Vert \hat{v}_{m,\infty} - v^* \Vert  + K_3 \Vert \Phi_m - \Phi \Vert_{\mathbb{L}^2} + O_P \left( \dfrac{1}{\sqrt{n}}\right) $$
Using Lemmas \ref{lem1} and \ref{lem2}, we obtain that 
$$ \Vert \hat{v}_{m,n} - \hat{v}_{m,\infty} \Vert = O_P \left( \frac{1}{\sqrt{n}} \right) $$
$$\Vert \hat{v}_{m,\infty} - v^* \Vert = O_P (\varphi_m^{-1})$$
Finally, we get
$$ \Vert\hat\mu_{m,n}-\mu^*\Vert_{VT} = O_P(\varphi_m^{-1})  + O_P \left(\frac{1}{\sqrt{n}} \right), $$ which proves the result. \qed
\end{proof}

\subsection{Application to remote sensing}
\label{subsec:5}
In remote sensing of aerosol vertical profiles, one wishes to recover the concentration of aerosol particules from noisy observations of the radiance field (i.e., a radiometric quantity), in several spectral bands (see e.g. Gabella et al, 1997; Gabella, Kisselev and Perona, 1999).
More specifically, at a given level of modeling, the noisy observation $y^{obs}$ may be expressed as
\begin{equation}
\label{eq:aero}
y^{obs} = \int_{\mathcal{X}}\Phi(x;t^{obs})d\mu_X(x) + \varepsilon,
\end{equation}
where $\Phi:\mathcal{X}\times\mathcal{T}\to\mathbb{R}^k$ is a given operator, and where $t^{obs}$ is a vector of angular parameters observed simultaneously with $y^{obs}$.
The aerosol vertical profile is a function of the altitude $x$ and is associated with the measure $\mu_X$ to be recovered, i.e., the aerosol vertical profile is the Radon-Nykodim derivative of $\mu_X$ with respect to a given reference measure (e.g., the Lebesgue measure on $\mathbb{R}$).
The analytical expression of $\Phi$ is fairly complex as it sums up several models at the microphysical scale, so that basically $\Phi$ is available in the form of a computer code.
So this problem motivates the introduction of an efficient numerical procedure for recovering the unknwon $\mu_X$ from $y^{obs}$ and arbitrary $t^{obs}$.\\

More generally, the remote sensing of the aerosol vertical profile is in the form of an inverse problem where some of the inputs (namely $t^{obs}$) are observed simultaneously with the noisy output $y^{obs}$.
Suppose that random points $X_1,\dots,X_n$ of $\mathcal{X}$ have been generated.
Then, applying the maximum entropy approach would require the evaluations of $\Phi(X_i,t^{obs})$ each time $t^{obs}$ is observed.
If one wishes to process a large number of observations, say $(y^{obs}_i,t^{obs}_i)$, for different values $t^{obs}_i$, the computational cost may become prohibitive.
So we propose to replace $\Phi$ by an approximation $\Phi_m$, the evaluation of which is faster in execution.
To this aim, suppose first that $\mathcal{T}$ is a subset of $\mathbb{R}^p$.
Let $T_1,...,T_m$ be random points of $\mathcal{T}$, independent of $X_1,\dots,X_n$, and drawn from some probability measure $\mu_{T}$ on $\mathcal{T}$ admitting a density $f_T$ with respect to the Lebesgue measure on $\mathbb{R}^p$ such that $f_T(t)>0$ for all $t\in\mathcal{T}$.
Next, consider the operator
$$\Phi_m(x,t) = \frac{1}{f_T(t)}\frac{1}{m}\sum_{i=1}^mK_{h_m}(t-T_i)\Phi(x,T_i),$$
where $K_{h_m}(.)$ is a symetric kernel on $\mathcal{T}$ of smoothing sequence $h_n$.
It is a classical result to prove that $\Phi_m$ converges to $\Phi$ in quadratic norm provided  $h_m$ tends to $0$ at a suitable rate, which ensures that Assumption 3 of Theorem~\ref{th:main} is satisfied.
Since the $T_i$'s are independent from the $X_i$, one may see that Theorem~\ref{th:main} applies, and so the solution to the approximate inverse problem
$$y^{obs}=\int_{\mathcal{X}}\Phi_m(x;t^{obs})d\mu_X(x) + \varepsilon,$$
will converge to the solution to the original inverse problem in Eq.~\ref{eq:aero}.
In terms of computational complexity, the advantage of this approach is that the construction of the AMEM estimate requires, for each new observation $(y^{obs},t^{obs})$, the evaluation of the $m$ kernels at $t^{obs}$, i.e., $K_{h_m}(t^{obs}-T_i)$, the $m\times n$ ouputs $\Phi(X_i,T_j)$ for $i=1,\dots,n$ and $j=1,\dots,m$ having evaluated once and for all.

\subsection{Application to deconvolution type problems in optical nanoscopy} \label{subsec:6}
Following the framework defined in \cite{Hohage}, the number of photons counted can be expressed using a convolution of $p(x-y,y)$ the probability of recording a photon emission at point $y$ when illuminating point $x$, with $d\mu(y)=f(y)dy$ the measure of the fluorescent markers.
$$g(x)= \int p(x-y,x) f(y) dy. $$
Here $p(x-y,y)=p(x,y,\phi(x))$.
Reconstruction of $\mu$ can be achieved using AMEM technics.

\section{Tecnical Lemmas}
\label{sec:4}
Recall the following definitions 
\begin{eqnarray*} 
\hat{v}_{m,\infty} & = & \underset{v \in \mathbb{R}^k}{\text{argmin}} \ H(\Phi_m,v)   =   \underset{v \in \mathbb{R}^k}{\text{argmin}} \left\{ \int_\mathcal{X} \Lambda_{\nu_Z}(\langle  \Phi_m(x),v \rangle ) dP_X - \underset{y \in K_Y}{\inf} \langle v,y \rangle \right\}, \\
\hat{v}_{m,n} & = & \underset{v \in \mathbb{R}^k}{\text{argmin}} \ H_n(\Phi_m,v)= \underset{v \in \mathbb{R}^k}{\text{argmin}}  \left\{ \frac{1}{n} \displaystyle \sum_{i=1}^n \Lambda_{\nu_Z}(\langle v,\Phi_m(X_i)\rangle) - \underset{y \in K_Y}{\inf} \langle v,y \rangle \right\}, \\
v^* & = &  \underset{v \in \mathbb{R}^k}{\text{argmin}} H(\Phi,v) = \underset{v \in \mathbb{R}^k}{\text{argmin}} \left\{ \int_\mathcal{X} \Lambda_{\nu_Z}(\langle  \Phi(x),v \rangle )dP_X(x) - \underset{y \in K_Y}{\inf} \langle v,y\rangle  \right\}
\end{eqnarray*} 
\begin{lemma}[Uniform convergence at a given approximation level $m$] \label{lem1}
For all $m$, we get
$$ \Vert \hat{v}_{m,n} - \hat{v}_{m,\infty} \Vert = O_P \left( \frac{1}{\sqrt{n}} \right) $$
\end{lemma}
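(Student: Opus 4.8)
The plan is to treat $\hat v_{m,n}$ and $\hat v_{m,\infty}$ as the minimizers of the convex maps $v\mapsto H_n(\Phi_m,v)$ and $v\mapsto H(\Phi_m,v)$ and to run a standard first-order-condition ($Z$-estimation) comparison between them. Since $K_Y$ is the closed ball of centre $y^{obs}$ and radius $\eta$, I first rewrite $\inf_{y\in K_Y}\langle v,y\rangle=\langle v,y^{obs}\rangle-\eta\|v\|$, so that for $v\neq 0$ both objectives are differentiable with
$$\nabla H_n(\Phi_m,v)=\frac{1}{n}\sum_{i=1}^n\Lambda'_{\nu_Z}(\langle v,\Phi_m(X_i)\rangle)\Phi_m(X_i)-y^{obs}+\eta\frac{v}{\|v\|},$$
and with $\nabla H(\Phi_m,v)$ given by the same expression with $\frac1n\sum_i$ replaced by the $P_X$-expectation. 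By Assumption~4 together with the strict convexity of $\Lambda_{\nu_Z}$ (so $\Lambda''_{\nu_Z}>0$) and the linear independence of the components of $\Phi_m$ (Assumption~5), both objectives are strictly convex and coercive on $\mathbb{R}^k={\rm dom}\,\Lambda_{\nu_Z}$ (Assumption~2-(i)), so the minimizers are unique; as recorded in the Remark, $\hat v_{m,\infty}$ lies in the interior of the domain, and I will argue it stays bounded and bounded away from $0$ uniformly in $m$ via the coercivity of $H(\Phi_m,\cdot)$ and Assumption~1.

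The two estimators then satisfy $\nabla H_n(\Phi_m,\hat v_{m,n})=0$ and $\nabla H(\Phi_m,\hat v_{m,\infty})=0$. A Taylor expansion of $\nabla H_n(\Phi_m,\cdot)$ around $\hat v_{m,\infty}$ with integral remainder (Assumption~6 supplies the continuity needed) gives
$$0=\nabla H_n(\Phi_m,\hat v_{m,\infty})+M_n(\hat v_{m,n}-\hat v_{m,\infty}),$$
where $M_n$ is the averaged Hessian $\int_0^1\nabla^2 H_n(\Phi_m,\hat v_{m,\infty}+t(\hat v_{m,n}-\hat v_{m,\infty}))\,dt$, hence
$$\hat v_{m,n}-\hat v_{m,\infty}=-M_n^{-1}\nabla H_n(\Phi_m,\hat v_{m,\infty}).$$
It therefore suffices to prove $\nabla H_n(\Phi_m,\hat v_{m,\infty})=O_P(1/\sqrt n)$ and $\|M_n^{-1}\|=O_P(1)$.

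For the score term I subtract the population optimality condition $\nabla H(\Phi_m,\hat v_{m,\infty})=0$ and obtain
$$\nabla H_n(\Phi_m,\hat v_{m,\infty})=\frac{1}{n}\sum_{i=1}^n\Lambda'_{\nu_Z}(\langle \hat v_{m,\infty},\Phi_m(X_i)\rangle)\Phi_m(X_i)-\mathbb{E}\!\left[\Lambda'_{\nu_Z}(\langle \hat v_{m,\infty},\Phi_m(X)\rangle)\Phi_m(X)\right],$$
a centred average of i.i.d. terms; since $\Lambda'_{\nu_Z}$ is bounded (Assumption~2-(ii)) and $\mathbb{E}\|\Phi_m(X)\|^2<\infty$, each summand has bounded variance, so Chebyshev's inequality (or the CLT) yields the $O_P(1/\sqrt n)$ rate. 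For the Hessian, $M_n$ is an average of matrices of the form
$$\frac{1}{n}\sum_{i=1}^n\Lambda''_{\nu_Z}(\langle \tilde v,\Phi_m(X_i)\rangle)\Phi_m(X_i)\Phi_m(X_i)^\top+\frac{\eta}{\|\tilde v\|}\!\left(I-\frac{\tilde v\tilde v^\top}{\|\tilde v\|^2}\right),$$
the second summand being positive semidefinite; by the law of large numbers the first summand approaches $\mathbb{E}[\Lambda''_{\nu_Z}(\cdot)\Phi_m\Phi_m^\top]$, which is positive definite because $\Lambda''_{\nu_Z}>0$ and the components of $\Phi_m$ are linearly independent. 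This bounds the smallest eigenvalue of $M_n$ away from $0$ with probability tending to one, giving $\|M_n^{-1}\|=O_P(1)$, and combining the two estimates yields $\|\hat v_{m,n}-\hat v_{m,\infty}\|\leq\|M_n^{-1}\|\,\|\nabla H_n(\Phi_m,\hat v_{m,\infty})\|=O_P(1/\sqrt n)$.

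The main obstacle is making every bound uniform in $m$, as the title and the later combination with Lemma~\ref{lem2} require: the variance bound needs $\sup_m\mathbb{E}\|\Phi_m(X)\|^2<\infty$, which holds because $\Phi_m$ converges, hence is bounded, in $\mathbb{L}^2(P_X)$ (Assumption~3); the uniform lower bound on $\lambda_{\min}(M_n)$ needs $\inf_m\lambda_{\min}\big(\mathbb{E}[\Lambda''_{\nu_Z}(\cdot)\Phi_m\Phi_m^\top]\big)>0$, which I would deduce from the per-$m$ positive definiteness (Assumption~5) together with the convergence $\mathbb{E}[\Phi_m\Phi_m^\top]\to\mathbb{E}[\Phi\Phi^\top]$ and continuity of the smallest eigenvalue. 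The remaining delicate point is the non-smoothness of $\eta\|v\|$ at the origin: one must ensure that $\hat v_{m,\infty}$, and hence the intermediate points $\tilde v$, stay bounded away from $0$ uniformly in $m$, which again rests on coercivity of $H(\Phi_m,\cdot)$ and Assumption~1.
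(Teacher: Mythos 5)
Your core computations agree with the paper's: the same rewriting $\inf_{y\in K_Y}\langle v,y\rangle=\langle v,y^{obs}\rangle-\eta\|v\|$, the same gradient and Hessian (the paper's $M_1+\eta M_2$ decomposition is exactly your $\mathbb{E}[\Lambda''_{\nu_Z}(\cdot)\Phi_m\Phi_m^\top]$ plus the Hessian of the norm), the same Gram-matrix positivity argument from Assumption~5, and the same uniform-in-$m$ moment bound from Assumption~3. But the framework is genuinely different. The paper never touches first-order conditions: it verifies the hypotheses of van der Vaart's Corollary 5.53 (the rate theorem for M-estimators) --- namely a Lipschitz-in-$v$ envelope $\dot h_m(x)=\|\Lambda'_{\nu_Z}\|_\infty\|\Phi_m(x)\|+\|y^{obs}\|+\eta$ lying in $L^2(P_X)$ uniformly in $m$, together with a second-order expansion of the \emph{population} criterion at its minimum with nondegenerate Hessian --- and obtains $\sqrt n(\hat v_{m,n}-\hat v_{m,\infty})=O_P(1)$ as a black box. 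That route needs smoothness of $H(\Phi_m,\cdot)$ only at the single deterministic point $\hat v_{m,\infty}$, and the non-smooth term $\eta\|v\|$ causes no trouble because only the Lipschitz property of $h_m$ is required away from that point. Your sandwich argument is the classical alternative and would in addition deliver asymptotic normality, but it pays for this by requiring twice-differentiability of the \emph{empirical} criterion along the whole random segment joining $\hat v_{m,\infty}$ to $\hat v_{m,n}$, and control of Hessians at random intermediate points.

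That is precisely where your write-up has a genuine gap. The identity $0=\nabla H_n(\Phi_m,\hat v_{m,\infty})+M_n(\hat v_{m,n}-\hat v_{m,\infty})$ and the bound $\|M_n^{-1}\|=O_P(1)$ both presuppose that, with probability tending to one, $\hat v_{m,n}$ lies in a small ball around $\hat v_{m,\infty}$ not containing the origin: otherwise the segment may cross $0$, where $\eta\|v\|$ is not even once differentiable, and your appeal to ``the law of large numbers'' for $\frac1n\sum_i\Lambda''_{\nu_Z}(\langle\tilde v,\Phi_m(X_i)\rangle)\Phi_m(X_i)\Phi_m(X_i)^\top$ is not legitimate as stated, since the points $\tilde v$ are random and depend on the entire sample. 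You flag the origin issue, but propose to settle it through properties of $\hat v_{m,\infty}$ alone (coercivity, Assumption~1), which cannot control the random segment. The missing ingredient is a consistency step, $\hat v_{m,n}\to\hat v_{m,\infty}$ in probability --- which the paper does supply (uniform convergence of the contrast over compacts, before invoking Corollary 5.53, whose hypotheses include consistency). In your convex setting this is cheap: pointwise LLN plus the convexity lemma (pointwise convergence of convex functions implies uniform convergence on compacta, hence convergence of the unique minimizers). With consistency in hand, localize to a ball around $\hat v_{m,\infty}$ avoiding the origin and use a uniform LLN there (available since $\Lambda''_{\nu_Z}$ is bounded and continuous, Assumptions~2 and~6); this repairs both the Taylor expansion and the eigenvalue bound. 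So your route is viable, but without the consistency/localization step the Hessian control fails, and the proof as written is incomplete.
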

\begin{proof}
$\hat{v}_{m,n}$ is defined as the minimizer of an empirical constrast function  $H_n(\Phi_m,.)$.  Indeed, set $$h_m(v,x) = \Lambda_{\nu_Z}( \langle \Phi_m(x),v \rangle ) - \underset{y \in K_Y}{\inf } \langle v,y \rangle ,$$ hence $$H(\Phi_m,v) = P_X h_m(v,.).$$
Using classical theorem from the theory of M-estimation, we get the convergence in probability of $\hat{v}_{m,n}$ towards $\hat{v}_{m,\infty}$ provided that the contrast  converges uniformly over every compact set of $\mathbb{R}^k$ towards  $H(\Phi_m,.)$ when $n \rightarrow \infty$. More precisely Corollary 5.53 in van der Vaart
 (1998) states that if we consider $x \mapsto h_m (v,x)$  a measurable function and  $ \overset{.}{h_m} $ a function in $L^2(P)$, such that for all $v_1$ and $v_2$ in a neighbourhood of  $v^*$
$$ \vert h_m(v_1,x) - h_m(v_2, x) \vert \leq  \overset{.}{h_m} (x) \Vert v_1 - v_2 \Vert .$$
Moreover if $v \mapsto P h_m(v,.)$ has a Taylor expansion of order at least 2 around its unique minimum $v^*$ and if the Hessian matrix at this point is positive, hence provided  $\; \mathbb{P}_n h_m({\hat v_n},.) \leq \; \mathbb{P}_n h_m(v^*,) + \; O_P(n^{-1})$ then
$$ \sqrt{n}(\hat v_n - v^*) = O_P(1).$$
We want to apply this result to our problem. Let  $\eta$ be an un upper bound for $\|\varepsilon\|$, we set 
$ h_m(v,x) = \Lambda_{\nu_Z}( \langle \Phi_m(x),v \rangle ) - \langle v, y^{obs} \rangle - \underset{ \Vert y - y^{obs} \Vert \leq \eta }{\inf } \langle v,y - y_{obs} \rangle. $ Now note that $z \mapsto \langle v, z \rangle $ reaches its minimum on $ \mathcal{B}(0,\eta)$ at the point $- \eta \dfrac{v}{\Vert v \Vert}$, so
$$h_m(v,x) = \Lambda_{\nu_Z}( \langle \Phi_m(x),v \rangle ) - \langle v, y^{obs} \rangle + \eta \Vert v \Vert $$
For all $v_1$, $v_2 \in \mathbb{R}^k$, we have 
\begin{eqnarray*}
& & \vert h_m(v_1,x) - h_m(v_2,x) \vert \\
 & = & \vert \Lambda_{\nu_Z}( \langle \Phi_m(x),v_1 \rangle ) - \underset{y \in K_Y}{\inf } \langle v_1,y \rangle - \Lambda_{\nu_Z}( \langle \Phi_m(x),v_2 \rangle ) + \underset{y \in K_Y}{\inf } \langle v_2,y \rangle \vert \\
 & \leq & \vert \Lambda_{\nu_Z}( \langle \Phi_m(x),v_1 \rangle ) - \Lambda_{\nu_Z}( \langle \Phi_m(x),v_2 \rangle ) \vert + \vert \underset{y \in K_Y}{\inf } \langle v_2,y \rangle - \underset{y \in K_Y}{\inf } \langle v_1,y \rangle \vert \\
 & \leq & \vert \Lambda_{\nu_Z}( \langle \Phi_m(x),v_1 \rangle ) - \Lambda_{\nu_Z}( \langle \Phi_m(x),v_2 \rangle ) \vert + \vert \langle v_2 - v_1, y^{obs} \rangle - \eta (\Vert v_2 \Vert - \Vert v_1 \Vert) \vert \\
 & \leq & \left( \Vert \Lambda_{\nu_Z}' \Vert_\infty \Vert \Phi_m(x) \Vert + \Vert y^{obs} \Vert + \eta \right) \Vert v_1 - v_2 \Vert 
\end{eqnarray*}
Define $\overset{.}{h_m} : x \mapsto \Vert \Lambda_{\nu_Z}' \Vert_\infty \Vert \Phi_m(x) \Vert + \Vert y^{obs} \Vert + \eta $. Since $(\Phi_m)_m$ is bounded in $\mathbb{L}^2(P_X)$,  $(\overset{.}{h_m})_m$ is in $L^2(P_X)$ uniformly with respect to $m$, which entails that
\begin{equation} \label{H1}
\exists K, \forall m, \int_\mathcal{X} \overset{.}{h_m}^2  dP_X < K
\end{equation}
Hence the function  $\overset{.}{h_m}$ satisifes the first condition 
$$ \vert h_m(v_1,x) - h_m(v_2,x) \vert \leq \overset{.}{h_m} (x) \Vert v_1 - v_2 \Vert $$
Now, consider $H(\Phi_m,.)$ Let $V_{m,v}$ be the Hessian matrix of $H(\Phi_m,.)$ at point $v$. We need to prove that $V_{m,\hat{v}_{m,\infty}}$ is non negative. Let $\partial_i$ be the derivative with respect to the $i^{\rm th}$ component. Set $v \neq 0$, we have
\begin{eqnarray*}
{V_{m,v}}^{ij}(v) = \partial_i\partial_jH(  \Phi_m,v) & = & \int_{\mathcal{X}}\partial_i\partial_j h_m(v,x) dP_X\\
& = & \int_{\mathcal{X}}\Phi_m^i(x)\Phi_m^j(x) \Lambda_{\nu_Z}''( \langle \Phi_m(x),v \rangle ) dP_X + \eta \; \partial_i\partial_j N(v)
\end{eqnarray*}
where let $N$ be $N:v \mapsto \Vert v \Vert$.\\
Hence the Hessian matrix $V_{m,\hat{v}_{m,\infty}}$ of $H(\Phi_m,.)$ at point $\hat{v}_{m,\infty}$ can be split into the sum ot the following matrices
\begin{eqnarray*}
(M_1)_{ij} & = & \int_{\mathcal{X}}\Phi_m^i(x)\Phi_m^j(x) \Lambda_{\nu_Z}''( \langle \Phi_m(x),\hat{v}_{m,\infty} \rangle ) dP_X, \\
(M_2)_{ij} & = & \partial_i\partial_j N(\hat{v}_{m,\infty}).
\end{eqnarray*}
Under Assumptions  (A3) and (A5), $\Lambda_{\nu_Z}''$ is positive and belongs to $L_1(P_X)$ since it is bounded. So we can define $\int_{\mathcal{X}}\Phi_m^i(x)\Phi_m^j(x) \Lambda_{\nu_Z}''( \langle \Phi_m(x),\hat{v}_{m,\infty} \rangle ) dP_X$ as the scalar product of $ \Phi_m^i$ and $ \Phi_m^j$  in the space $\mathbb{L}^2(\Lambda_{\nu_Z}''( \langle \Phi_m(.),\hat{v}_{m,\infty} \rangle P_X)$.\\
$M_1$ is a Gram matrix, hence using (A6) it is a non negative matrix.\\
$M_2$ can be computed as follows. For all  $v \in \mathbb{R}^k/ \lbrace 0 \rbrace$, we have
\begin{eqnarray*}
N(v) & = & \sqrt{\textstyle\sum_{i=1}^k v_i^2} \\
\partial_iN(v) & = & \dfrac{v_i}{\Vert v \Vert}\\
\partial_i \partial_j N(v) & = &  \left\lbrace  - \dfrac{v_iv_j}{\Vert v \Vert ^3} \: \: \: \: \: \: \  \text{si } i \neq j  \atop  \dfrac{\Vert v \Vert^2 - v_i^2}{\Vert v \Vert ^3} \: \:   \text{si } i=j  \right.   
\end{eqnarray*} 
Hence for all $a \in \mathbb{R}^k$, we can write
\begin{eqnarray*}
& & a^TM_2a \\
& = & \sum_{1 \leq i,j \leq k} \partial_i\partial_jN(\hat{v}_{m,\infty})a_ia_j\\
        & = & \sum_{i=1}^k \frac{\Vert \hat{v}_{m,\infty} \Vert^2 - {\hat{v}_{m,\infty,i}}^2}{\Vert \hat{v}_{m,\infty} \Vert ^3}a_i ^2 - \sum_{i \neq j} \frac{\hat{v}_{m,\infty,i} \hat{v}_{m,\infty,j}}{\Vert \hat{v}_{m,\infty} \Vert ^3}a_ia_j \\
        & = & \dfrac{1}{\Vert \hat{v}_{m,\infty} \Vert^3} \left(  \Vert \hat{v}_{m,\infty} \Vert^2 \sum_{i=1}^k a_i^2 - \sum_{i=1}^k a_i^2 {\hat{v}_{m,\infty,i}}^2 - \sum_{1 \leq i,j \leq k} a_i \hat{v}_{m,\infty,i} a_j \hat{v}_{m,\infty,j} + \sum_{i=1}^ka_i^2 {\hat{v}_{m,\infty,i}}^2 \right)  \\
        & = & \dfrac{1}{\Vert \hat{v}_{m,\infty} \Vert^3} \left( \Vert \hat{v}_{m,\infty} \Vert^2 \Vert a \Vert ^2 - \sum_{1 \leq i,j \leq k} a_i \hat{v}_{m,\infty,i} a_j \hat{v}_{m,\infty,j} \right) \\
        & = & \dfrac{1}{\Vert \hat{v}_{m,\infty} \Vert^3} \left(  \Vert \hat{v}_{m,\infty} \Vert ^2 \Vert a \Vert ^2 - \langle a,\hat{v}_{m,\infty} \rangle ^2 \right) \geq  0 \: \:  \text{using  Cauchy-Schwarz's inequality.}\\
\end{eqnarray*}
So $M_2$ is clearly non negative, hence  $V_{m,\hat{v}_{m,\infty}} = M_1 + \eta M_2$ is also non negative. Finally we conclude that $H(\Phi_m,.)$ undergoes the assumptions of Theorem  5.1. \qed
\end{proof}
\begin{lemma} \label{lem2}
$$\Vert \hat{v}_{m,\infty} - v^* \Vert = O_P (\varphi_m^{-1})$$
\end{lemma}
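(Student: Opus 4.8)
The plan is to characterise both $\hat{v}_{m,\infty}$ and $v^*$ through their first-order optimality conditions and to compare these, exploiting the Hessian structure of the population contrast already analysed, for $\Phi_m$, in Lemma~\ref{lem1}. As computed in the proof of Lemma~\ref{lem1} one has $\inf_{y\in K_Y}\langle v,y\rangle=\langle v,y^{obs}\rangle-\eta\Vert v\Vert$, so both contrasts are differentiable away from the origin; writing
$$G(v)=\int_{\mathcal{X}}\Phi(x)\,\Lambda'_{\nu_Z}(\langle\Phi(x),v\rangle)\,dP_X(x)-y^{obs}+\eta\frac{v}{\Vert v\Vert}$$
and defining $G_m$ identically with $\Phi$ replaced by $\Phi_m$, the minimisers are characterised by $G(v^*)=0$ and $G_m(\hat{v}_{m,\infty})=0$ (I assume here, as is generic when the noise constraint is active, that $v^*\neq 0$, whence $\hat{v}_{m,\infty}\neq 0$ for $m$ large). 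It is worth noting at the outset that a crude suboptimality estimate — bounding $H(\Phi,\hat{v}_{m,\infty})-H(\Phi,v^*)$ by $\sup_v|H(\Phi,v)-H(\Phi_m,v)|$ and dividing by the strong-convexity modulus — would only deliver the rate $O_P(\varphi_m^{-1/2})$; to reach the sharper $O_P(\varphi_m^{-1})$ I would work with the gradients directly.

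The steps I would carry out, in order, are the following. First, establish consistency $\hat{v}_{m,\infty}\to v^*$: since $\Lambda'_{\nu_Z}$ is bounded (Assumption~2), $H(\Phi_m,\cdot)\to H(\Phi,\cdot)$ uniformly on compact sets, and uniqueness of $v^*$ (Assumption~2(i)) forces $\hat{v}_{m,\infty}$ into an arbitrarily small neighbourhood $B$ of $v^*$ for $m$ large, on which $v\neq 0$. Second, bound the gradient perturbation uniformly on $B$ by decomposing
$$G_m(v)-G(v)=\int_{\mathcal{X}}(\Phi_m-\Phi)\Lambda'_{\nu_Z}(\langle\Phi_m,v\rangle)\,dP_X+\int_{\mathcal{X}}\Phi\big(\Lambda'_{\nu_Z}(\langle\Phi_m,v\rangle)-\Lambda'_{\nu_Z}(\langle\Phi,v\rangle)\big)\,dP_X$$
and applying, exactly as in the proof of Theorem~\ref{th:main}, the boundedness of $\Lambda'_{\nu_Z}$ and $\Lambda''_{\nu_Z}$, the mean value theorem and Cauchy--Schwarz; each term is then $O_P(\Vert\Phi_m-\Phi\Vert_{\mathbb{L}^2})=O_P(\varphi_m^{-1})$ by Assumption~3, uniformly in $v\in B$. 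Third, since $G_m(\hat{v}_{m,\infty})=0$, this yields $\Vert G(\hat{v}_{m,\infty})\Vert=\Vert G(\hat{v}_{m,\infty})-G_m(\hat{v}_{m,\infty})\Vert=O_P(\varphi_m^{-1})$.

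Finally, I would invert this estimate. The Jacobian of $G$ at $v^*$ is exactly the population Hessian $V_{v^*}=M_1+\eta M_2$ of Lemma~\ref{lem1} (with $\Phi$ in place of $\Phi_m$), where $M_1=\int_{\mathcal{X}}\Phi\Phi^{\top}\Lambda''_{\nu_Z}(\langle\Phi,v^*\rangle)\,dP_X$ and $M_2=\nabla^2 N(v^*)$ for $N(v)=\Vert v\Vert$. Since $\Lambda''_{\nu_Z}>0$ and, by Assumption~5, the components of $\Phi$ are linearly independent, $M_1$ is positive definite, and as $M_2\succeq 0$ one gets $V_{v^*}\succeq M_1\succ 0$; hence $G$ is a local diffeomorphism near $v^*$ with Lipschitz inverse, so that $\Vert\hat{v}_{m,\infty}-v^*\Vert\leq C\,\Vert G(\hat{v}_{m,\infty})-G(v^*)\Vert=O_P(\varphi_m^{-1})$, as claimed. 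The main obstacle is precisely this last step: Lemma~\ref{lem1} only furnishes positive \emph{semi}definiteness of the Hessian, whereas here I need a strict, quantitative lower bound on the smallest eigenvalue of $V_{v^*}$ so that the Lipschitz constant $C$ of $G^{-1}$ stays controlled; this is where the linear independence of the components of the limiting operator $\Phi$ (giving $M_1\succ 0$) and the hypothesis $v^*\neq 0$ (keeping $\eta\Vert v\Vert$ differentiable at $v^*$) are indispensable.
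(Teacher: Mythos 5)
Your proof is correct and takes essentially the same route as the paper's: the paper likewise establishes consistency from uniform convergence of $H(\Phi_m,\cdot)$ to $H(\Phi,\cdot)$ on compacts, bounds $\Vert \nabla H(\Phi_m,\cdot)-\nabla H(\Phi,\cdot)\Vert \leq (C_1+C_2\Vert v\Vert)\Vert \Phi_m-\Phi\Vert_{\mathbb{L}^2}$ by the same mean-value/Cauchy--Schwarz decomposition, and then converts this gradient bound into the rate via part 2 of Lemma~\ref{Lemme52}, which is precisely your "invert $G$ through the positive-definite Hessian at $v^*$" step in Taylor--Lagrange form. Your explicit observation that positive \emph{semi}definiteness (all that Lemma~\ref{lem1} proves) is insufficient, and that the Gram matrix $M_1$ must be strictly positive definite via linear independence of the components, is a point the paper passes over quickly ("as previously we can show that the Hessian matrix \ldots is positive"), so your version is, if anything, slightly more careful on that score.
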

\begin{proof}
First write,
\begin{eqnarray*}
\vert H(\Phi_m,v) - H(\Phi,v) \vert & = & \vert \int_\mathcal{X} \Lambda_{\nu_Z}( \langle \Phi_m(x),v \rangle) - \Lambda_{\nu_Z}( \langle \Phi(x),v \rangle ) dP_X(x) \vert \\
& \leq & \Vert \Lambda'_{\nu_Z} \Vert_\infty \Vert v \Vert \Vert \Phi_m - \Phi \Vert_{\mathbb{L}^2},
\end{eqnarray*} which implies uniform convergence over every compact set of $ H(\Phi_m,.) $ towards $H(\Phi,.)$ when $m \rightarrow \infty$, yelding   that $\hat{v}_{m,\infty} \rightarrow v^*$ in probability. To compute the rate of convergence, we use Lemma~\ref{Lemme52}. As previously we can show that the Hessian matrix of $H( \phi,.)$ at point $v^*$ is positive.
We need to prove uniform convergence of  $\nabla H( \phi_m,.  )$ towards $\nabla H( \phi,.  )$. For this, write 
\begin{eqnarray*}
& & \partial_i \left[ H( \phi_m,.  ) - H( \phi,.  ) \right] (v) \\
  & = & \! \! \! \int_\mathcal{X} \Phi^i_m(x) \Lambda'_{\nu_Z}( \langle \Phi_m(x),v \rangle) - \Phi^i(x) \Lambda'_{\nu_Z}( \langle \Phi(x),v \rangle ) dP_X(x) \\
& = & \! \! \! \int_\mathcal{X} (\Phi^i_m - \Phi^i)(x) \Lambda'_{\nu_Z}( \langle \Phi_m(x),v \rangle) - \Phi^i(x) \Lambda''_{\nu_Z}( \xi) \langle (\Phi - \Phi_m)(x),v \rangle  dP_X(x) \\
& \leq & \! \! \! \Vert \Phi^i - \Phi^i_m\Vert_{\mathbb{L}^2} \Vert \Lambda'_{\nu_Z}\Vert_\infty + \Vert \Phi^i \Vert_{\mathbb{L}^2} \Vert \Lambda''_{\nu_Z} \Vert_\infty  \Vert \Phi - \Phi_m \Vert_{\mathbb{L}^2} \Vert v \Vert  
\end{eqnarray*}
using again Cauchy-Schwarz's inequality. Finally we obtain 
$$ \Vert  \nabla \left( H( \phi_m,.  ) - H(\Phi,.)\right) (v) \Vert \leq (C_1 + C_2 \Vert v \Vert ) \  \Vert \Phi - \Phi_m \Vert_{\mathbb{L}^2} $$
for positive constants $C_1$ and $C_2$. For any compact neighbourhood of $v^*$, $\mathcal{S}$, the function $v \mapsto \Vert  \nabla \left( H( \phi_m,.  ) - H(\Phi,.)\right) (v) \Vert$ converges uniformly to $0$. But for $m$ large enough, $\hat{v}_{m,\infty} \in \mathcal{S}$ almost surely. Using 2. in Lemma~\ref{Lemme52} with the function  $v \mapsto \Vert  \nabla \left( H( \phi_m,.  ) - H(\Phi,.)\right) (v) \Vert {\bf 1}_{\mathcal{S}}(v)$ converging uniformly to  $0$, implies that   
$$\Vert \hat{v}_{m,\infty} - v^* \Vert = O_P (\varphi_m^{-1}). \qed $$
\end{proof}

\begin{lemma} \label{Lemme52} 
Let $f$ be defined on $ \mathcal{S} \subset \mathbb{R}^d \rightarrow \mathbb{R}$, which reaches a unique minimum at point $\theta_0$. Let  $(f_n)_n$ be a sequence of continuous functions which converges uniformly towards $f$. Let $\hat{\theta}_n = {\rm arg}\min f_n$. If $f$ is twice differentiable on a neighbourhood of $\theta_0$ and provided its Hessian matrix  $V_{\theta_0}$ is non negative, hence we get
\begin{enumerate} 
\item there exists a positive constant $C$ such that
$$ \Vert \hat\theta_n- \theta_0 \Vert \leq C \sqrt{\Vert f - f_n \Vert_\infty} $$ 
\item Moreover if $\theta \mapsto V_\theta$ is continuous in a neighbourhood of $\theta_0$ and $\Vert  \nabla f_n(.) \Vert$ uniformly converges towards  $\Vert  \nabla f(.) \Vert$, hence there exists a constant  $C'$ such that 
$$ \Vert \hat\theta_n- \theta_0 \Vert \leq C' \Vert  \nabla( f - f_n)\Vert_\infty $$
with $\Vert g \Vert_\infty = \ \underset{x \in \mathcal{S}}{\sup} \ \Vert g(x) \Vert$
\end{enumerate}
\end{lemma}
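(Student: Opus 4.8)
The plan is to prove both estimates by the same \emph{sandwich} scheme, playing the (near-)optimality of $\hat\theta_n$ for $f_n$ against a local strong-convexity estimate for $f$ coming from its Hessian at $\theta_0$. A preliminary step common to both parts is to establish \emph{consistency}, $\hat\theta_n \to \theta_0$. This is the standard argmin-consistency argument: since $\theta_0$ is the unique minimizer and (taking $\mathcal{S}$ compact, or assuming $f$ coercive) the minimum is well separated, for every $\varepsilon > 0$ one has $\delta := \inf_{\|\theta - \theta_0\| \geq \varepsilon} f(\theta) - f(\theta_0) > 0$; then combining $f_n(\hat\theta_n) \leq f_n(\theta_0)$ with $\|f_n - f\|_\infty < \delta/2$ contradicts $\|\hat\theta_n - \theta_0\| \geq \varepsilon$, so $\hat\theta_n \to \theta_0$. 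In particular, for $n$ large $\hat\theta_n$ lies in the neighbourhood of $\theta_0$ on which $f$ is twice differentiable, which legitimizes every Taylor argument below.

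For Part 1, I would first write the three-term decomposition
$$ f(\hat\theta_n) - f(\theta_0) = \big(f(\hat\theta_n) - f_n(\hat\theta_n)\big) + \big(f_n(\hat\theta_n) - f_n(\theta_0)\big) + \big(f_n(\theta_0) - f(\theta_0)\big). $$
The middle bracket is $\leq 0$ by optimality of $\hat\theta_n$, while the two outer brackets are each at most $\|f - f_n\|_\infty$ in absolute value, whence $f(\hat\theta_n) - f(\theta_0) \leq 2\|f - f_n\|_\infty$. For the matching lower bound I would Taylor-expand $f$ at $\theta_0$: as $\nabla f(\theta_0) = 0$, for $\theta$ in the neighbourhood one gets $f(\theta) - f(\theta_0) \geq \tfrac{c}{2}\|\theta - \theta_0\|^2$, with $c > 0$ controlled by the smallest eigenvalue of $V_{\theta_0}$. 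Evaluating at $\theta = \hat\theta_n$ (licit for $n$ large by consistency) and combining the two inequalities gives $\tfrac{c}{2}\|\hat\theta_n - \theta_0\|^2 \leq 2\|f - f_n\|_\infty$, i.e. $\|\hat\theta_n - \theta_0\| \leq C\sqrt{\|f - f_n\|_\infty}$.

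For Part 2, I would exploit the first-order condition instead. Assuming $f_n$ differentiable and $\hat\theta_n$ interior to $\mathcal{S}$, we have $\nabla f_n(\hat\theta_n) = 0$, so that $\|\nabla f(\hat\theta_n)\| = \|\nabla f(\hat\theta_n) - \nabla f_n(\hat\theta_n)\| \leq \|\nabla(f - f_n)\|_\infty$. On the other hand, writing $\nabla f(\hat\theta_n) = \nabla f(\hat\theta_n) - \nabla f(\theta_0) = \big(\int_0^1 V_{\theta_0 + t(\hat\theta_n - \theta_0)}\,dt\big)(\hat\theta_n - \theta_0)$ and using the continuity of $\theta \mapsto V_\theta$ together with consistency, the averaged Hessian is, for $n$ large, within $\tfrac{c}{2}$ of $V_{\theta_0}$ in operator norm, hence has smallest eigenvalue at least $\tfrac{c}{2}$ and satisfies $\|\nabla f(\hat\theta_n)\| \geq \tfrac{c}{2}\|\hat\theta_n - \theta_0\|$. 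Chaining the two bounds yields $\|\hat\theta_n - \theta_0\| \leq \tfrac{2}{c}\|\nabla(f - f_n)\|_\infty = C'\|\nabla(f - f_n)\|_\infty$.

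The main obstacle is precisely this local lower bound on $f$ (respectively on $\|\nabla f\|$) near $\theta_0$. Literally, the hypothesis only states that $V_{\theta_0}$ is non-negative (positive semi-definite), which is automatic at a minimum and is \emph{not} enough to produce the quadratic growth $f(\theta) - f(\theta_0) \geq \tfrac{c}{2}\|\theta - \theta_0\|^2$ with $c > 0$ that both estimates require. The argument genuinely needs $V_{\theta_0}$ to be positive definite, so that its smallest eigenvalue $c$ is strictly positive; this is what is actually verified in the applications, where the Gram-matrix term $M_1$ is nondegenerate under the linear-independence Assumption~5. I would therefore read ``non negative'' as ``positive definite'', and secondarily make explicit the well-separatedness of the minimizer needed in the consistency step (automatic if $\mathcal{S}$ is compact and $f$ continuous).
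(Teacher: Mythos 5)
Your proof is correct and follows essentially the same route as the paper's: part 1 rests on near-optimality of $\hat\theta_n$ plus quadratic growth of $f$ near $\theta_0$ (the paper phrases this as the localization $f_n(\theta_0)<\inf_{d(\theta,\theta_0)>\delta_1}f_n(\theta)$, which is the same sandwich with the same constant $\sqrt{2/C_1}$), and part 2 rests on the first-order condition $\nabla f_n(\hat\theta_n)=0$ together with a mean-value Hessian lower bound, where the paper treats only $d=1$ by Taylor--Lagrange and asserts the extension that your integral-form argument carries out in general dimension. Your reading of ``non negative'' as positive definite is also the right one: the paper itself silently relies on it, both in asserting the growth bound $\inf_{d(\theta,\theta_0)>\delta}f(\theta)-f(\theta_0)>C_1\delta^2$ and in using $f''(\tilde\theta_n)\to f''(\theta_0)>0$, neither of which holds for a merely semi-definite Hessian.
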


\begin{proof}
The proof of this classical result in optimization relies on easy convex analysis tricks. For sake of completeness, we recall here the main guidelines.\\
1. There are non negative constants $C_1$ et $\delta_0$ such that  
$$\forall \; 0 < \delta \leq \delta_0, \underset{d(\theta , \theta_0) > \delta}{\inf} f(\theta) - f(\theta_0) > C_1\delta^2$$  
Set $ \Vert f_n - f \Vert_\infty = \varepsilon_n$. For $0 < \delta_1 < \delta_0$, let $n$ be chosen such that $2\varepsilon_n \leq C_1\delta_1^2$. Hence
$$ \underset{d(\theta,\theta_0)> \delta_1}{\inf } f_n(\theta) \geq \underset{d(\theta,\theta_0)> \delta_1}{\inf } f(\theta) - \varepsilon_n > f(\theta_0) + \varepsilon_n \geq f_n (\theta_0)$$
Finally  $ f_n(\theta_0) < \underset{d(\theta,\theta_0)> \delta_1}{\inf } f_n(\theta) $ \: $ \Longrightarrow \hat\theta_n \in \lbrace \theta: d(\theta,\theta_0) \leq \delta_1 \rbrace$, which enables to conclude setting  $C=\sqrt{\frac{2}{C_1}}$.\\

2. We prove the result for $d = 1$, which can be easily extended for all $d$. Using Taylor-Lagrange expansion, there exists $\tilde{\theta}_n \in \ ]\hat\theta_n, \theta_0 [$ such that 
$$f'(\theta_0) = 0 = f'(\hat\theta_n) + (\theta_0- \hat\theta_n)f''( \tilde{\theta}_n).  $$
Remind that $f''( \tilde{\theta}_n) \underset{n \rightarrow \infty}{\longrightarrow} f''( \theta_0) > 0$. So, for $n$ large enough there exits  $C' > 0$ such that
$$ \vert \theta_0- \hat\theta_n \vert = \dfrac{\vert f'(\hat\theta_n) - f'(\theta_0) \vert}{\vert f''( \tilde{\theta}_n) \vert} \leq C' \Vert f' - f'_n \Vert_\infty , $$ which ends the proof. \qed
\end{proof}

%
%
%

\end{document}